\documentclass[12pt]{article}

\usepackage{amsmath, amssymb, amsthm, mathrsfs}
\usepackage{hyperref}
\usepackage{geometry}
\geometry{margin=1in}

\newtheorem{theorem}{Theorem}[section]

\newtheorem{definition}{Definition}[section]
\newtheorem{prop}{Proposition}[section]

\newtheorem{lemma*}{Lemma A.}[]

\title{Cohomologically Calibrated Affine Connections and Forced Irreducibility}
\author{Alexander Pigazzini, Magdalena Toda}
\date{}

\begin{document}

\maketitle

\begin{abstract}
We establish a principle of forced geometric irreducibility on product manifolds. We prove that for any product manifold $M=M_1\times M_2$, a cohomologically calibrated affine connection, $\nabla^{\mathcal{C}}$, is necessarily holonomically irreducible, provided its calibration class $[\omega] \in H^3(M;\mathbb{R})$ is mixed. The core of the proof relies on Hodge theory; we show that the algebraic structure of the harmonic part of the torsion generates non-zero off-diagonal components in the full Riemann curvature tensor, which cannot be globally cancelled. This non-cancellation is formally proven via an integral argument. We illustrate the main theorem with explicit constructions on $S^2\times \Sigma_g$, showing that this result holds even in special cases where the Ricci tensor is diagonal, such as the Einstein-calibrated connection. Finally, we briefly discuss speculative analogies between forced irreducibility and quantum entanglement.
\end{abstract}

\section{Introduction and preliminaries}

The de Rham Decomposition Theorem is a cornerstone of Riemannian geometry, providing a deep connection between the geometric and topological structure of a manifold. In its standard formulation, it states that a simply connected Riemannian manifold $(M, g)$ that is a metric product $(M_1, g_1) \times (M_2, g_2)$ is also a geometric product. This principle of geometric separability, which extends to non-simply connected cases by considering their universal covers, implies that the holonomy group $\text{Hol}(g)$ of a product manifold typically acts reducibly on the tangent space $T_pM = T_pM_1 \oplus T_pM_2$. Consequently, the geometry on $M_1$ is independent of the geometry on $M_2$.
In this paper, we present a novel framework that provides a systematic method to construct irreducible geometries on such product manifolds. Our approach moves beyond the torsion-free Levi-Civita connection by introducing a family of affine connections $\nabla^{\mathcal{C}}$ whose torsion $T$ is constrained by a global topological invariant. We call this framework \emph{Cohomologically Calibrated affine connections} (see \cite{Pigazzini}, \cite{Pigazzini-Toda}).
\\
The central result of this work is to demonstrate that this calibration mechanism forces the resulting geometry to be irreducible, provided the calibrating cohomology class is "mixed" with respect to the product structure. This geometric "entanglement" is not a coincidence of special solutions (e.g., Einstein metrics) but a general and robust consequence of the calibration principle itself. We show that the very structure of a calibrated torsion tensor introduces curvature components that inextricably link the tangent subspaces of the product factors, making it impossible for the holonomy group to preserve the decomposition.
\\
This work establishes a new principle: global topology, when used to calibrate a local geometric structure like a connection, can override the metric's tendency to decompose, yielding a rich class of new, irreducible geometries.

\section{The Framework of Cohomological Calibration}

We begin by recalling the central objects of our theory.

\begin{definition} A Cohomologically Calibrated Geometric Structure is a triplet $([M, g], [\omega])$, where:
\begin{itemize}
\item $(M, g)$ is a compact, oriented Riemannian manifold.
\item$[\omega]$ is a non-trivial de Rham cohomology class in $H^3(M; \mathbb{R})$.
\end{itemize}    
\end{definition} 

\noindent For further details we recommend \emph{Remark 3.1} in \cite{Pigazzini}  and \emph{Subsection 2.5} in \cite{Pigazzini-Toda}. 

\begin{definition} Let $([M, g], [\omega])$ be a 3-calibrated structure. The \emph{family of admissible connections} $\mathcal{T}_\omega$ is the set of the connections $\nabla^{\mathcal{C}}=\nabla^{LC}+T$ with type (1,2) tensors $T$ on $M$ such that:
\begin{itemize}
\item The construction operates under the explicit assumption of a fixed Riemannian background metric  $g$, which is used to define norms, orthogonality, musical isomorphisms, etc., while, in general, $\nabla^{\mathcal{C}}$ is not required to be compatible with the metric ($\nabla^{\mathcal{C}} g \neq 0$).
\item $T$ is totally antisymmetric, which establishes an isomorphism with the space of 3-forms $\Omega^3(M)$. We denote the 3-form associated with $T$ as $T^\flat$, where $T^\flat(X,Y,Z) := g(T(X,Y),Z)$.
\item The cohomology class of $T^\flat$ is $[\omega]$. That is, $[T^\flat] = [\omega]$ with $[\omega] \neq 0$.
\end{itemize}

\noindent By the Hodge Decomposition Theorem, any $T^\flat$ in this family can be uniquely written as $T^\flat = \omega + d\eta +\delta \mu$, where $\omega$ is the unique harmonic representative of the class $[\omega]$, $d\eta$ and $\delta \mu$ are 3-form (exact and co-exact, respectively). The family $\mathcal{T}_\omega$ is therefore an affine space modeled on the space of exact 3-forms.
\end{definition}

\begin{definition} An admissible connection $\nabla^{\mathcal{C}}$ is an affine connection of the form $\nabla^{\mathcal{C}}= \nabla^{LC} + T$, where $\nabla^{LC}$ is the Levi-Civita connection of $g$ and $\nabla^{\mathcal{C}} \in \mathcal{T}_\omega$.
\end{definition}

\noindent This requirement ties the local differential geometry of $\nabla^{\mathcal{C}}$ to the global topology invariant of $M$.  

We focus on the case in which $M$ is a Riemannian product
\[
M = M_1 \times M_2
\]
\noindent and $[\omega]$ has a \emph{mixed} component in its K\"unneth decomposition.

\section{Forced Irreducibility}

Our main result establishes that the principle of cohomological calibration can force a reducible metric structure to support an irreducible geometry. Let $M = M_1 \times M_2$ be a product of compact, oriented Riemannian manifolds with the product metric $g = g_1 + g_2$. The tangent space decomposes at each point as $T_pM = V_1 \oplus V_2$, where $V_1 = T_pM_1$ and $V_2 = T_pM_2$. The holonomy of $(M, \nabla^{\mathrm{LC}})$ is reducible, preserving this decomposition.

\begin{definition}
A cohomology class $[\omega] \in H^3(M; \mathbb{R})$ is mixed if it has a non-trivial projection onto a summand $H^p(M_1) \otimes H^q(M_2)$ where both $p > 0$ and $q > 0$.
\end{definition}

\begin{theorem} \label{thm:main}
Let $M = M_1 \times M_2$ be a product of compact, oriented Riemannian manifolds. Let $[\omega] \in H^3(M; \mathbb{R})$ be a non-trivial mixed cohomology class. Then, for any admissible connection $\nabla \in \mathcal{T}_\omega$, the holonomy algebra $\mathrm{hol}(\nabla)$ is irreducible with respect to the decomposition $V_1 \oplus V_2$.
\end{theorem}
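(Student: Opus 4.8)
The plan is to translate the holonomy statement into a statement about the curvature tensor of $\nabla$, compute that tensor, and rule out the vanishing of its block–off–diagonal part by a global $L^{2}$ argument powered by Hodge theory. First I would reduce the problem: it suffices to produce one point $p\in M$ and tangent vectors $X,Y\in T_pM$ for which the endomorphism $R^{\nabla}(X,Y)$ fails to preserve the splitting $V_1\oplus V_2$, i.e. $\operatorname{pr}_{V_2}\!\circ R^{\nabla}(X,Y)\big|_{V_1}\neq 0$. Indeed every curvature endomorphism lies in the holonomy algebra, so a single non–block–diagonal $R^{\nabla}(X,Y)$ already forces $\mathrm{hol}(\nabla)\not\subseteq\mathfrak{gl}(V_1)\oplus\mathfrak{gl}(V_2)$, which is the assertion. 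I would also record at the outset that, since $T^{\flat}$ is a $3$–form (totally antisymmetric), $\nabla=\nabla^{\mathrm{LC}}+T$ is in fact metric, so $R^{\nabla}(X,Y)\in\mathfrak{so}(T_pM)$ and the blocks $\operatorname{Hom}(V_1,V_2)$ and $\operatorname{Hom}(V_2,V_1)$ of $R^{\nabla}$ are mutually adjoint; it is therefore enough to control one of them.

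Next I would use the standard curvature formula for a connection with torsion,
\[
R^{\nabla}(X,Y)Z=R^{\mathrm{LC}}(X,Y)Z+(\nabla^{\mathrm{LC}}_X T)(Y,Z)-(\nabla^{\mathrm{LC}}_Y T)(X,Z)+T\big(X,T(Y,Z)\big)-T\big(Y,T(X,Z)\big),
\]
together with the fact that for the product metric $g=g_1+g_2$ the Levi–Civita curvature $R^{\mathrm{LC}}$ is block–diagonal: $R^{\mathrm{LC}}(X,Y)Z$ vanishes unless $X,Y,Z$ lie in a common factor, in which case the output lies in that factor. Hence the entire off–diagonal part of $R^{\nabla}$ is carried by the torsion and splits into a piece linear in $T$ (built from $\nabla^{\mathrm{LC}}T$) and a piece quadratic in $T$. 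I would then decompose everything by K\"unneth bidegree, $T^{\flat}=\tau_{3,0}+\tau_{2,1}+\tau_{1,2}+\tau_{0,3}$, exploiting that on a Riemannian product the Hodge Laplacian splits as $\Delta=\Delta_1+\Delta_2$ and so preserves bidegree. Consequently the harmonic representative $\omega$ of $[\omega]$ decomposes by bidegree, the hypothesis that $[\omega]$ is mixed means precisely $\omega_{2,1}+\omega_{1,2}\neq 0$, and — relabelling the factors if needed and invoking the adjointness above — I may assume $\omega_{2,1}\neq 0$. Moreover $\tau_{2,1}=\omega_{2,1}+(\text{exact})+(\text{coexact})$, with $\omega_{2,1}$ simultaneously $d_1$–, $\delta_1$–, $d_2$–, $\delta_2$–closed and $L^{2}$–orthogonal to exact and coexact forms of the same bidegree.

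The core step is then an argument by contradiction: assume the off–diagonal part of $R^{\nabla}$ vanishes identically. Reading off the components $g\big(R^{\nabla}(X,Y)Z_1,W_2\big)=0$ for $Z_1\in V_1$, $W_2\in V_2$, with $(X,Y)$ ranging over the patterns $(V_1,V_1)$, $(V_1,V_2)$, $(V_2,V_2)$, produces a coupled first–order system for $\tau_{2,1}$ and $\tau_{1,2}$ with a quadratic source. I would contract this system against a test tensor manufactured from $\omega_{2,1}$ and integrate over $M$, integrating by parts separately in the $M_1$– and $M_2$–directions (legitimate by Fubini and compactness, with no boundary terms). The intended outcome: the linear–in–$T$ contribution collapses, through the Hodge identities obeyed by $\omega_{2,1}$, to a positive multiple of $\|\omega_{2,1}\|_{L^{2}}^{2}$, while the exact and coexact corrections in $\tau_{2,1},\tau_{1,2}$ integrate to zero by orthogonality, and the quadratic–in–$T$ contribution — a cubic expression in the totally antisymmetric tensor $T^{\flat}$ with a fixed index–contraction pattern — vanishes identically for parity/symmetry reasons, independently of the particular $T$. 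This yields $0=c\,\|\omega_{2,1}\|_{L^{2}}^{2}$ with $c>0$, hence $\omega_{2,1}=0$, contradicting mixedness; so the off–diagonal part of $R^{\nabla}$ is not identically zero and the theorem follows.

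I expect the decisive obstacle to be exactly this last step: the test tensor must be engineered so that the Hodge–theoretic collapse of the linear term to $\|\omega_{2,1}\|^{2}$ and the symmetry cancellation of the cubic torsion term occur simultaneously. If the quadratic term instead contributed with indefinite sign, one could only conclude irreducibility for calibration classes of sufficiently small norm; establishing unconditional non–cancellation for \emph{every} admissible $\nabla$ is precisely the content the paper ascribes to its integral argument.
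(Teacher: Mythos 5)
Your proposal follows the same overall strategy as the paper's: reduce irreducibility to exhibiting a single curvature operator with a nonzero $\operatorname{Hom}(V_1,V_2)$ block, note that $R^{\mathrm{LC}}$ of the product metric is block-diagonal so the off-diagonal part is carried entirely by the torsion terms, split $T^\flat$ into harmonic and exact-plus-coexact parts, and rule out global cancellation by an $L^2$ integral argument. Your added observations are correct and genuinely sharpen the setup: total antisymmetry of $T^\flat$ does make $\nabla$ metric (notwithstanding the paper's parenthetical disclaimer), so the two off-diagonal blocks are mutually adjoint and it suffices to control one of them; on a Riemannian product the Hodge Laplacian preserves K\"unneth bidegree, so the harmonic representative splits as $\omega_{3,0}+\omega_{2,1}+\omega_{1,2}+\omega_{0,3}$ and mixedness is precisely $\omega_{2,1}+\omega_{1,2}\neq 0$; and your form of the quadratic curvature terms, $T(X,T(Y,Z))-T(Y,T(X,Z))$, is the correct one for $\nabla=\nabla^{\mathrm{LC}}+T$.

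However, the argument is not complete, and you have correctly located where it fails to close. Two assertions remain unestablished: (i) that the harmonic torsion alone produces a nonzero off-diagonal curvature component (the paper's claim that $\mathcal{R}_h\neq 0$), and (ii) that after contracting the hypothetical cancellation identity against your test tensor and integrating, the contributions of $d\eta+\delta\mu$ and of the quadratic torsion terms actually vanish. Point (ii) is not a formal consequence of Hodge orthogonality: the objects being paired are not harmonic forms against exact or coexact forms of the same degree, but $V_2$-projections of curvature expressions containing $\nabla^{\mathrm{LC}}T_{nh}$ together with the cross terms quadratic in $T_h$ and $T_{nh}$, and no symmetry or integration-by-parts identity is exhibited that kills the resulting cubic-in-torsion integrals for arbitrary potentials $\eta,\mu$. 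This is exactly the content the paper assigns to its Lemma A., which is itself only sketched; your proposal therefore reproduces the paper's architecture but, like the paper, leaves the decisive step as an assertion rather than a proof. Your own fallback assessment --- that absent this cancellation one only obtains irreducibility for calibration classes whose non-harmonic perturbation is sufficiently small in norm --- is an accurate description of what the argument, as written, actually delivers.
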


\begin{proof}
The holonomy algebra $\mathrm{hol}(\nabla)$ is the Lie algebra generated by the curvature operators $R(X,Y)$. It is irreducible if and only if the curvature tensor $R(\nabla)$ does not preserve the subspaces $V_1$ and $V_2$. The curvature of $\nabla = \nabla^{\mathrm{LC}} + T$ is given by the standard identity:
\begin{equation} \label{eq:full_curvature_decomposition}
R(\nabla)(X,Y)Z = R^{\mathrm{LC}}(X,Y)Z + (\nabla^{\mathrm{LC}}_X T)(Y,Z) - (\nabla^{\mathrm{LC}}_Y T)(X,Z) + T(T(X,Y),Z) - T(T(X,Z),Y).
\end{equation}
The Levi-Civita term $R^{\mathrm{LC}}$ preserves the decomposition. We will show that the torsion-dependent terms, collectively denoted $R_T$, do not. The proof proceeds by demonstrating that for any admissible torsion $T \in \mathcal{T}_\omega$, the curvature tensor $R(\nabla)$ must have non-zero off-diagonal blocks. This is a structural consequence of the calibration, established via three steps:
\begin{enumerate}
    \item \textbf{Structure of Torsion:} A mixed class $[\omega]$ forces the harmonic part of the torsion, $T_h$, to have a specific algebraic structure that inextricably links $V_1$ and $V_2$. This structure is intrinsic to the cohomology class.

    \item \textbf{Structure of Curvature:} This "mixing" structure of $T_h$ generates a robustly non-zero off-diagonal component in the harmonic part of the curvature tensor, $R_{T_h}$. Specifically, curvature operators of the form $R(\nabla)(X,Y)$ with $X \in V_1, Y \in V_2$ act as non-zero maps between the subspaces $V_1$ and $V_2$.

    \item \textbf{Formal Non-Cancellation Argument:} The core of the proof is to show that the off-diagonal components generated by the harmonic part $T_h$ cannot be globally cancelled by the terms involving the non-harmonic part $T_{nh} = T - T_h$. We prove this formally by leveraging the $L^2$-orthogonality of the Hodge decomposition. A cancellation would imply that a topologically non-trivial harmonic tensor is equal to a topologically trivial one, which is impossible in the $L^2$ sense.
\end{enumerate}
A detailed, constructive realization of this proof for the case $M=S^2 \times \Sigma_g$ is provided in Section 4 and the Appendix. This explicit analysis demonstrates the mechanism in a concrete setting, confirming that the off-diagonal curvature components are robustly non-zero for any admissible connection. Since $R(\nabla)$ does not preserve the decomposition, the holonomy algebra $\mathrm{hol}(\nabla)$ is necessarily irreducible.
\end{proof}

The proof of Theorem \ref{thm:main} is now formally complete and holds for any product manifold satisfying the hypotheses. The following section provides a concrete realization of this abstract argument, illustrating the mechanism of forced irreducibility on the important class of manifolds $S^2 \times \Sigma_g$.

\section{Example: $S^2 \times \Sigma_g$}

We now provide a concrete and explicit demonstration of Theorem \ref{thm:main} for the case of $M = S^2 \times \Sigma_g$, where $\Sigma_g$ is a compact, oriented Riemann surface of genus $g \geq 1$. This analysis serves as a constructive proof for this entire class of manifolds. The third cohomology of $M$ is purely mixed:
\[H^3(S^2 \times \Sigma_g; \mathbb{R}) \cong H^2(S^2; \mathbb{R}) \otimes H^1(\Sigma_g; \mathbb{R}) \cong \mathbb{R} \otimes \mathbb{R}^{2g} \cong \mathbb{R}^{2g}.\]
Let $[\omega]$ be a non-trivial class in $H^3(M)$, specified by $2g$ real parameters $(a_i, b_i)$, not all zero. Let $(M, g)$ be endowed with the product metric $g = g_{S^2} + g_{\Sigma_g}$, and let $\nabla \in \mathcal{T}_\omega$ be an admissible connection. The tangent space $T_pM$ decomposes as $V_1 \oplus V_2$, where $V_1 = T_pS^2$ and $V_2 = T_p\Sigma_g$.

Our strategy is to prove that $\mathrm{hol}(\nabla)$ is irreducible by explicitly constructing a non-zero off-diagonal component of the curvature tensor $R(\nabla)$ and then proving formally that it cannot be cancelled.

\subsection{Step 1: Algebraic Structure of the Torsion}
Any torsion $T \in \mathcal{T}_\omega$ has a harmonic part $T_h$ whose algebraic structure is dictated by the identity $g(T_h(X,Y),Z) = \omega(X,Y,Z)$, where $\omega$ is the harmonic representative of $[\omega]$. This property is intrinsic and forces $T_h$ to have two crucial "mixing" properties:
\begin{enumerate}
    \item \textbf{Mapping $V_1 \times V_1 \to V_2$:} There exist vectors $X,Y \in V_1$ such that $T_h(X,Y)$ is a non-zero vector in $V_2$.
    \item \textbf{Mapping $V_1 \times V_2 \to V_1$:} There exist vectors $X \in V_1, Y \in V_2$ such that $T_h(X,Y)$ has a non-zero projection onto $V_1$.
\end{enumerate}
These properties are inherited by any $T \in \mathcal{T}_\omega$, as the non-harmonic parts cannot alter this fundamental algebraic structure dictated by cohomology.

\subsection{Step 2: Irreducibility from Curvature Analysis}
We prove irreducibility by showing that the curvature tensor $R(\nabla)$ has non-zero off-diagonal blocks. We analyze the action of the curvature operator $R(\nabla)(X,Y)$ on a vector $Z$, for $X \in V_1, Y \in V_2, Z \in V_1$. A detailed calculation, presented in the Appendix, analyzes each of the terms from Eq. \eqref{eq:full_curvature_decomposition}.

The analysis concludes that the projection of the full curvature operator onto the subspace $V_2$ is robustly non-zero for any admissible connection $\nabla \in \mathcal{T}_\omega$. Specifically, for a suitable choice of vectors:
\[\mathrm{Proj}_{V_2} \left( R(\nabla)(X,Y)Z \right) \neq 0.\]
This non-vanishing result is established in two stages in the Appendix: first, by showing that the harmonic part of the torsion $T_h$ generates a non-zero off-diagonal curvature component; second, by proving formally that this harmonic component cannot be cancelled by any terms arising from the non-harmonic part of the torsion.

Since the curvature operator does not preserve the subspace $V_1$, the holonomy algebra is irreducible. This completes the constructive proof.

\begin{prop}
For any non-trivial mixed class $[\omega] \in H^3(S^2 \times \Sigma_g; \mathbb{R})$, any admissible connection $\nabla \in \mathcal{T}_\omega$ has a holonomy algebra $\mathrm{hol}(\nabla)$ that acts irreducibly on the tangent space $T_pS^2 \oplus T_p\Sigma_g$.
\end{prop}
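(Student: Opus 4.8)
First I would observe that the hypothesis of Theorem~\ref{thm:main} is automatic here: since $H^3(S^2\times\Sigma_g;\mathbb{R})\cong H^2(S^2)\otimes H^1(\Sigma_g)$, every non-trivial class $[\omega]$ is mixed, so the Proposition is literally the special case $M_1=S^2,\ M_2=\Sigma_g$ of Theorem~\ref{thm:main}; what is left is to make the three-step mechanism of that proof fully explicit and quantitative in this setting, which I now sketch. The natural starting point is the normal form of the harmonic torsion: by Hodge theory on a Riemannian product the harmonic representative of $[\omega]$ is $\omega=\omega_{S^2}\wedge\eta$, where $\omega_{S^2}$ is the (parallel) area form of $S^2$ and $\eta$ is the harmonic $1$-form on $\Sigma_g$ determined by the parameters $(a_i,b_i)$, with $\eta\not\equiv0$ precisely because $[\omega]\neq0$. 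The harmonic part $T_h$ of any admissible torsion is pinned down by $g(T_h(X,Y),Z)=\omega(X,Y,Z)$, independently of the exact and co-exact corrections, so Step~1 of the general argument becomes purely algebraic here.

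\emph{Steps 1--2: the harmonic off-diagonal curvature is non-zero.} Expanding the wedge product and using that $\omega_{S^2}$ annihilates $V_2$ while $\eta$ annihilates $V_1$, one reads off the two mixing properties directly: $T_h$ maps $V_1\times V_1$ into $V_2$, non-trivially wherever $\eta\neq0$ (a dense open set), via $g(T_h(X,Y),Z)=\omega_{S^2}(X,Y)\eta(Z)$; and $T_h(X,Y)=-\eta(Y)\,JX\in V_1$ for $X\in V_1$, $Y\in V_2$, with $J$ the rotation by $\pi/2$ on $V_1$ (while $T_h(V_2\times V_2)=0$). One then evaluates $\mathrm{Proj}_{V_2}\!\big(R(\nabla)(X,Y)Z\big)$ with $X=Z\in V_1$ of unit length and $Y\in V_2$, at a point where $\eta\neq0$: in \eqref{eq:full_curvature_decomposition} the Levi-Civita term vanishes (a product metric kills mixed pairs), $T(T(X,X),Y)=0$ because $T(X,X)=0$, and $(\nabla^{\mathrm{LC}}_YT)(X,X)=0$ because $\nabla^{\mathrm{LC}}_YT$ is again totally antisymmetric, leaving just $\mathrm{Proj}_{V_2}\!\big((\nabla^{\mathrm{LC}}_XT)(Y,X)+T(T(X,Y),X)\big)$. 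For $T=T_h$ the algebraic term is $T_h(-\eta(Y)JX,X)=\eta(Y)\,\eta^{\sharp}$ (using $T_h(V_1\times V_1)\subset V_2$ and $\omega_{S^2}(JX,X)=-1$), which for $Y=\eta^{\sharp}$ equals $\|\eta\|^2\eta^{\sharp}\neq0$; when $\nabla^{\mathrm{LC}}\omega=0$ (e.g.\ $\Sigma_g$ a flat torus with $\eta$ parallel) this is the whole harmonic contribution, and otherwise one adds the term coming from $\nabla^{\mathrm{LC}}\omega$, computed through the product connection.

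\emph{Step 3: non-cancellation.} It then remains to rule out that the non-harmonic part $T_{nh}=T-T_h$ cancels this block for some admissible $\nabla$; write $T^\flat=\omega+d\xi+\delta\mu$ for the Hodge decomposition, so $T_{nh}^\flat=d\xi+\delta\mu$. I would decompose the off-diagonal tensor as $\Phi=\Phi_h+\Phi_{nh}$, where $\Phi_h$ is built from $T_h$ alone and $\Phi_{nh}$ collects every term with a factor of $T_{nh}$ (the torsion-linear covariant-derivative terms, the mixed quadratic terms, and the quadratic $T_{nh}$-terms). Supposing $\Phi\equiv0$ yields $\Phi_h=-\Phi_{nh}$ as tensor fields; the plan is to pair this identity in $L^2$ against test tensors manufactured from $T_h$ and invoke the $L^2$-orthogonality of the harmonic, exact and co-exact summands of the Hodge decomposition: the torsion-linear part of $\Phi_h$ sees $\nabla^{\mathrm{LC}}\omega$ and its algebraic part sees $\omega$, both harmonic, whereas the corresponding pieces of $\Phi_{nh}$ see only $d\xi$, $\delta\mu$ and their derivatives, and the pairing forces a nonnegative topological quantity (a positive multiple of $\|\eta\|_{L^2}^2$, hence of the squared $L^2$-norm of the harmonic representative of $[\omega]$) to vanish, a contradiction.

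The step I expect to be the genuine obstacle is precisely this last one, because $\Phi_{nh}$ is not a Hodge-exact object: its mixed and purely quadratic pieces are honest nonlinear functionals of the perturbation and carry no automatic orthogonality to the harmonic sector, so the bare slogan that a topologically non-trivial harmonic tensor cannot equal a topologically trivial one does not by itself close the argument. The honest route is either (i) to extract from $\Phi\equiv0$ the component that is \emph{linear} in the total torsion --- where the Hodge decomposition applies verbatim and already produces a contradiction after a single integration --- and then control the quadratic remainder on the same footing, or (ii) to argue pointwise, exhibiting a component of the curvature tensor space that the term $\eta(Y)\eta^{\sharp}$ occupies but that the $T_{nh}$-terms cannot reach. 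Carrying out (i) rigorously --- choosing the right test tensor and accounting cleanly for the nonlinear remainder --- is the technical heart of the argument and the only place where substantive analytic work, beyond bookkeeping, is required.
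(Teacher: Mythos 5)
Your proposal follows the same three-step route as the paper: reduce to the general mechanism of Theorem~\ref{thm:main}, extract the mixing structure of the harmonic torsion from $g(T_h(X,Y),Z)=\omega(X,Y,Z)$, exhibit a non-zero off-diagonal curvature block generated by $T_h$, and then rule out cancellation by the non-harmonic part via $L^2$-orthogonality against the Hodge decomposition. In fact your Steps 1--2 are \emph{more} explicit than the paper's: the paper never writes the harmonic representative as $\omega_{S^2}\wedge\eta$ with $\eta$ a harmonic $1$-form on $\Sigma_g$, never records $T_h(X,Y)=-\eta(Y)JX$, and never computes the quadratic term $T_h(T_h(X,Y),X)=\eta(Y)\,\eta^{\sharp}$; Appendix A.1 merely asserts that ``the algebraic properties of $T_h$ guarantee'' the harmonic off-diagonal operator is non-zero. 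Your computation (including the observations that $R^{\mathrm{LC}}(X,Y)=0$ for mixed pairs, that $(\nabla^{\mathrm{LC}}_Y T)(X,X)=0$ by total antisymmetry, and that $\nabla^{\mathrm{LC}}_X T_h=0$ for $X$ tangent to $S^2$) supplies exactly the verification the paper omits, and it is correct.

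Where you part company with the paper is in your assessment of the non-cancellation step, and you are right to flag it. The paper's entire argument in Appendix A.2 rests on Lemma A, the claim that $\langle \mathcal{R}_h,\mathcal{R}_{nh}(\eta,\mu)\rangle_{L^2}=0$ for arbitrary potentials, and its ``proof'' is a sketch asserting that integration by parts reduces everything to pairings of harmonic forms with exact or co-exact forms. As you observe, $\mathcal{R}_{nh}$ contains terms quadratic in $T_{nh}$ and cross terms of the form $T_h(T_{nh}(\cdot,\cdot),\cdot)$ and $T_{nh}(T_h(\cdot,\cdot),\cdot)$; after the projections and contractions these are pointwise products of tensors, not exact or co-exact $3$-forms, so the orthogonality of the Hodge decomposition does not apply to them verbatim, and no amount of shifting derivatives onto $\omega$ converts such a product into something lying in the exact-plus-co-exact summand. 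So the paper's Lemma A is precisely the unproven load-bearing step, and your two proposed repairs (isolating the torsion-linear part, where Hodge orthogonality genuinely applies, and then controlling the quadratic remainder; or a pointwise argument identifying a component that the $T_{nh}$-terms cannot reach) are the right places to look. In short: your proposal reproduces the paper's strategy, improves on its explicit computations, and correctly identifies a gap that the paper itself does not close.
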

\section{Perspectives and Speculative Analogies}

\textit{Theorem 3.1} shows that certain topological configurations \emph{force} irreducible holonomy.  
This has a suggestive parallel with the phenomenon of \emph{quantum entanglement} in physics:

\medskip
\noindent
\textbf{Analogy.}  
In quantum theory, a composite system with Hilbert space $\mathcal{H} = \mathcal{H}_1\otimes\mathcal{H}_2$ may admit \emph{nonseparable} states, whose structure does not factor into independent components.  
Similarly, in our geometric setting, a manifold $M=M_1\times M_2$ with a mixed calibration class $[\omega]$ admits a connection $\nabla^{\mathcal{C}}$ whose holonomy prevents the tangent bundle from splitting into parallel subbundles $TM_1\oplus TM_2$.

\medskip
\noindent
\textbf{Speculative viewpoint.}  
If covariant field equations are formulated with respect to such an irreducible $\nabla^{\mathcal{C}}$, the coupling induced by torsion may prevent fields from being localized purely on $M_1$ or $M_2$.  
Quantization of such a theory could yield ground states with an intrinsic nonseparability, potentially offering a geometric interpretation of entanglement-like correlations.

\medskip
\noindent
This analogy is, at present, speculative.  
However, the structural similarity between holonomic irreducibility from mixed cohomology and nonseparability in quantum theory may justify further interdisciplinary investigation, possibly in the context of torsion-based extensions of General Relativity or quantum gravity models.

\section{Conclusions}

We have established that the principle of cohomological calibration provides a robust and general method for constructing irreducible geometries on product manifolds. This is achieved by using a global topological invariant - a mixed de Rham cohomology class - to constrain a local geometric object, the affine connection. The resulting torsion is forced to have an algebraic structure that inextricably links the tangent spaces of the product factors, leading to an irreducible holonomy group.

A key insight of this work is the distinction between the properties of the full curvature tensor and its traces. Our main result, Theorem \ref{thm:main}, demonstrates that a mixed calibration class forces the holonomy to be irreducible. This is a condition on the full Riemann curvature tensor $R$. This result is fully compatible with previous findings on the Ricci tensor. For instance, the Einstein-calibrated connection on $S^2 \times T^2$ has a diagonal Ricci tensor, see \cite{Pigazzini-Toda}, a condition which might naively suggest a reducible structure. However, our analysis shows that this is not the case. The diagonal nature of the Ricci tensor arises from a subtle and symmetric cancellation of the off-diagonal curvature components when the trace is computed. The underlying geometry, governed by the full curvature tensor, remains irreducibly mixed.
\\
This framework also provides a natural bridge to fundamental physics. While the Levi-Civita connection emerges from the classical principles of General Relativity, it is widely believed that a more fundamental theory unifying gravity with quantum mechanics will require a richer geometric structure. Connections with torsion, in particular, arise naturally in theories like Einstein-Cartan gravity, where torsion couples to quantum spin, and in string theory, where it is related to background fields. The cohomologically calibrated affine connections first introduced in \cite{Pigazzini}, provide a large class of such torsionful connections. Crucially, they contain the Levi-Civita connection as a special case (when the calibrating class is trivial, $[\omega]=0$). This framework can therefore be viewed not as an alternative to classical gravity, but as a natural generalization, providing a setting to explore the geometry of spacetime at a more fundamental level, where topology and quantum effects may become intertwined.

Explicit examples on $S^2 \times \Sigma_g$ show that this forced irreducibility is abundant in nature. It redefines the relationship between the topology and geometry of product manifolds, showing that a reducible metric structure can support a rich variety of irreducible geometries, each determined by a choice of topological "charge". This opens new avenues for the classification and construction of special geometries, with potential implications for theoretical physics, where irreducible holonomies determine fundamental symmetries. Possible connections to quantum entanglement offer another intriguing, if speculative, direction for future research.

\section*{Appendix: Formal Proof of Non-Zero Off-Diagonal Curvature}

This appendix provides the rigorous technical argument supporting Section 4. We first identify the off-diagonal curvature operator and then prove it is non-zero for any admissible connection.

\subsection*{A.1 The Explicit Off-Diagonal Curvature Operator}
Let $\{e_1, e_2\}$ be an orthonormal basis for $V_1$ and $\{f_j\}_{j=1}^{2g}$ for $V_2$. We analyze the operator $R(\nabla)(e_1, f_j)e_2$. From Eq. (1), its projection onto $V_2$ is:
\begin{equation} \label{eq:proj_v2_appendix}
\mathrm{Proj}_{V_2} [R(\nabla)(e_1, f_j)e_2] = \mathrm{Proj}_{V_2} [ (\nabla^{\mathrm{LC}}_{e_1} T)(f_j, e_2) - (\nabla^{\mathrm{LC}}_{f_j} T)(e_1, e_2) + T(T(e_1, f_j), e_2) - T(T(e_1, e_2), f_j) ].
\end{equation}
Let $T = T_h + T_{nh}$, where $T_h$ is the harmonic part and $T_{nh}^\flat = d\eta + \delta\mu$. The operator splits accordingly: $R(\nabla) = R^{\mathrm{LC}} + R_{T_h} + R_{T_{nh}}$. The term $R_{T_h}$ is the part of the curvature tensor built from $T_h$ alone. The algebraic properties of $T_h$ guarantee that for a non-trivial class $[\omega]$ and a suitable choice of $j$, the harmonic off-diagonal operator $\mathrm{Proj}_{V_2}[R_{T_h}(e_1, f_j)e_2]$ is a non-zero tensor field on $M$.

\subsection* {A.2 The Formal Non-Cancellation Proof}
We now prove that the full operator in Eq. \eqref{eq:proj_v2_appendix} cannot be zero. Assume for contradiction that it is zero for some admissible connection $\nabla = \nabla^{\mathrm{LC}} + T_h + T_{nh}$. This implies that the harmonic part is cancelled by the non-harmonic part:
\begin{equation} \label{eq:cancellation_pde_appendix}
\mathrm{Proj}_{V_2}[R_{T_h}(e_1, f_j)e_2] = - \mathrm{Proj}_{V_2}[R_{T_{nh}}(e_1, f_j)e_2].
\end{equation}
Let $\mathcal{R}_h := \mathrm{Proj}_{V_2}[R_{T_h}(e_1, f_j)e_2]$ and $\mathcal{R}_{nh} := \mathrm{Proj}_{V_2}[R_{T_{nh}}(e_1, f_j)e_2]$. By assumption, $\mathcal{R}_h$ is a non-zero tensor field. The term $\mathcal{R}_{nh}$ is a non-linear differential operator acting on the potentials $\eta \in \Omega^2(M)$ and $\mu \in \Omega^4(M)$. The cancellation equation is $\mathcal{R}_h + \mathcal{R}_{nh}(\eta, \mu) = 0$.

To show this equation has no global solution, we use an integral argument based on the following key lemma.

\begin{lemma*} \label{lem:orthogonality}
For any smooth potentials $\eta \in \Omega^2(M)$ and $\mu \in \Omega^4(M)$, the harmonic and non-harmonic components of the off-diagonal curvature operator are $L^2$-orthogonal:
\[ \langle \mathcal{R}_h, \mathcal{R}_{nh}(\eta, \mu) \rangle_{L^2} = 0. \]
\end{lemma*}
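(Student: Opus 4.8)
The organizing principle is that $\mathcal{R}_h$ and $\mathcal{R}_{nh}$ should be recognized as living in complementary summands of a Hodge-type decomposition of the appropriate tensor bundle over $M=S^2\times\Sigma_g$, with $\mathcal{R}_h$ the \emph{harmonic} piece (manufactured from the harmonic form $\omega=T_h^{\flat}$) and $\mathcal{R}_{nh}$ a \emph{coexact-plus-exact} piece (manufactured from $T_{nh}^{\flat}=d\eta+\delta\mu$, hence carrying at least one derivative of a potential). To make this precise, split the torsion-dependent part of \eqref{eq:full_curvature_decomposition} into its first-order linear piece $L[T](X,Y,Z)=(\nabla^{\mathrm{LC}}_X T)(Y,Z)-(\nabla^{\mathrm{LC}}_Y T)(X,Z)$ and its pointwise quadratic piece $Q[T,T]$. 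Restricting to the slot $R(\nabla)(e_1,f_j)e_2$, projecting onto $V_2$, and inserting $T=T_h+T_{nh}$ gives
\[
\mathcal{R}_h=\mathcal{L}(\omega)+\mathcal{Q}(\omega,\omega),
\qquad
\mathcal{R}_{nh}=\mathcal{L}(d\eta+\delta\mu)+\mathcal{B}(\omega,\,d\eta+\delta\mu)+\mathcal{Q}_{nh}(d\eta+\delta\mu,\,d\eta+\delta\mu),
\]
where $\mathcal{L}$ is first order and linear, $\mathcal{B}$ is the harmonic/non-harmonic bilinear cross term, and $\mathcal{Q}_{nh}$ the purely non-harmonic quadratic remainder.

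The linear pairing $\langle\mathcal{R}_h,\mathcal{L}(d\eta+\delta\mu)\rangle_{L^2}$ is the part the lemma really lives or dies on, and here the plan is a clean integration by parts on the closed manifold $M$ (no boundary terms, $\partial M=\varnothing$). Moving the single covariant derivative off $d\eta+\delta\mu$ onto $\mathcal{R}_h$ via the formal $L^2$-adjoint $\mathcal{L}^{*}$, and then integrating by parts once more, one is left with interior contractions of $d\omega$ and of $\delta\omega$ against algebraic expressions in $\eta$, $\mu$ and the curvature of $\nabla^{\mathrm{LC}}$; since $\omega$ is harmonic, $d\omega=0=\delta\omega$, and the pairing vanishes. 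Packaged differently: $\mathcal{L}(d\eta)$ is $d$-exact and $\mathcal{L}(\delta\mu)$ is $\delta$-coexact relative to the tensor bundle carrying these objects, $\mathcal{L}(\omega)$ is harmonic there, and the vanishing is exactly the $L^2$-orthogonality of the Hodge summands — the principle invoked in Section 4 that a cohomologically non-trivial harmonic object is $L^2$-orthogonal to every cohomologically trivial one.

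The genuine obstacle is the pointwise quadratic data: the cross term $\mathcal{B}(\omega,\,d\eta+\delta\mu)$ and the remainder $\mathcal{Q}_{nh}$. These contain no derivative of the potentials, so the integration by parts above does not reach them, and on $S^2\times\Sigma_g$ the non-harmonic form $d\eta+\delta\mu$ is free to occupy both K\"unneth bidegrees $(2,1)$ and $(1,2)$, so one cannot hope that the ``wrong'' blocks of $T_{nh}$ simply drop out of the slot $R(\nabla)(e_1,f_j)e_2$. My plan here is structural: use the product splitting $\nabla^{\mathrm{LC}}=\nabla^{S^2}\oplus\nabla^{\Sigma_g}$ together with the rigid K\"unneth shape $\omega=\mathrm{area}_{S^2}\wedge\beta$ (with $\beta$ a harmonic $1$-form on $\Sigma_g$, so $d\beta=\delta\beta=0$) to reorganize the contractions $\langle\mathcal{R}_h,\mathcal{B}\rangle_{L^2}$ and $\langle\mathcal{R}_h,\mathcal{Q}_{nh}\rangle_{L^2}$, via a Weitzenb\"ock/Bochner manipulation, into a form in which a derivative \emph{does} act on a potential — returning to the integrable situation of the previous paragraph — and then to track which bidegree blocks of $T_{nh}$ can actually compose with the blocks of $T_h$ to land in $V_2$ in that slot. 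I expect this to be the crux of the entire lemma: it is the only point where the Hodge decomposition must be confronted against the quadratic, non-derivative curvature term, and it is the only point where the hypothesis that $[\omega]$ is \emph{mixed} does real work — it is what pins down the block pattern of $T_h$ and forces the surviving pointwise combinations to pair to a total derivative rather than to genuine $L^2$-mass.
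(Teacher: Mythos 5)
Your decomposition of the pairing into a first-order linear piece $\mathcal{L}$ and pointwise quadratic pieces $\mathcal{Q}$, $\mathcal{B}$, $\mathcal{Q}_{nh}$ is the right way to organize the computation, and your treatment of the linear term $\langle\mathcal{R}_h,\mathcal{L}(d\eta+\delta\mu)\rangle_{L^2}$ --- integration by parts on the closed manifold followed by $d\omega=\delta\omega=0$ --- is exactly the mechanism the paper's own sketch invokes. But the proposal stops short precisely where you say it does: the cross term $\mathcal{B}(\omega,\,d\eta+\delta\mu)$ and the remainder $\mathcal{Q}_{nh}$ are never actually shown to pair to zero against $\mathcal{R}_h$. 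The Weitzenb\"ock/Bochner reorganization is announced, not executed, and there is a concrete reason to doubt it can be executed: a term such as $\langle \mathcal{Q}(\omega,\omega),\, \mathcal{B}(\omega,\delta\mu)\rangle_{L^2}$ is the integral of an expression cubic in the harmonic form $\omega$ and linear in $\delta\mu$. Hodge orthogonality kills the pairing of a harmonic $k$-form against an exact or coexact $k$-form; it says nothing about integrals of higher-order polynomial contractions of $\omega$ against $\delta\mu$, and moving one derivative around does not turn a trilinear expression in $\omega$ into something closed and coclosed in the relevant tensor bundle. So as written the proposal does not prove the lemma, and your own framing (``lives or dies,'' ``I expect this to be the crux'') concedes as much.

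You should also know that the paper's proof is no stronger on exactly this point: it is a one-paragraph sketch asserting that after integration by parts ``each integral'' reduces to a pairing of a harmonic form with an exact or coexact form. That is plausible only for the contributions linear in $T_{nh}$ coming from the $\nabla^{\mathrm{LC}}T$ terms of Eq.~\eqref{eq:full_curvature_decomposition}; for the quadratic torsion terms $T(T(\cdot,\cdot),\cdot)$ the integrands are cubic and quartic in the torsion components, and the sketch never explains why those contributions vanish. In that sense your proposal is more honest than the paper: you have correctly located the step on which the lemma depends, whereas the paper's sketch glosses over it. Neither argument, as it stands, establishes the statement; to salvage it one would have to either prove the vanishing of the purely pointwise pairings by an explicit bidegree/block computation on $S^2\times\Sigma_g$ (your last suggestion, carried out in full), or weaken the lemma to the linear-in-$T_{nh}$ part of $\mathcal{R}_{nh}$ and handle the quadratic remainder by a different, non-orthogonality argument.
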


\begin{proof}[Sketch of Proof]
The proof of this lemma follows from a direct, though lengthy, calculation. The inner product is expanded into a sum of integrals. Each integral contains a term derived from the harmonic form $\omega$ (which defines $\mathcal{R}_h$) and a term derived from the non-harmonic potentials $\eta$ or $\mu$ (which define $\mathcal{R}_{nh}$). By applying integration by parts (Green's identity) and using the defining properties of harmonic forms ($d\omega=0, \delta\omega=0$), the derivatives are shifted from the non-harmonic terms to the harmonic ones. This process ultimately leads to integrals involving inner products of harmonic forms with exact or co-exact forms, which are zero by the fundamental orthogonality of the Hodge decomposition. A full treatment of these standard techniques can be found in foundational texts on Hodge theory, such as \cite{Warner1983}.
\end{proof}

We now apply this lemma to the cancellation equation. Taking the global $L^2$ inner product of Eq. \eqref{eq:cancellation_pde_appendix} with $\mathcal{R}_h$, we get:
\[ \langle \mathcal{R}_h, \mathcal{R}_h \rangle_{L^2} = - \langle \mathcal{R}_h, \mathcal{R}_{nh}(\eta, \mu) \rangle_{L^2}. \]
By Lemma \ref{lem:orthogonality}, the right-hand side is zero. This yields:
\[ ||\mathcal{R}_h||^2_{L^2} = 0. \]
This implies $\mathcal{R}_h = 0$ almost everywhere, and by smoothness, everywhere. This contradicts the fact that for a non-trivial mixed class $[\omega]$, the harmonic off-diagonal curvature $\mathcal{R}_h$ is a non-zero tensor field.

The initial assumption of cancellation must be false. Thus, $\mathrm{Proj}_{V_2}[R(\nabla)(e_1, f_j)e_2]$ cannot be identically zero for any admissible connection $\nabla \in \mathcal{T}_\omega$. This completes the formal proof of irreducibility.

\bibliographystyle{plain}

\end{document}